\newcommand{\bracketed}[2]{\genfrac{[}{]}{0pt}{0}{#1}{#2}}
\newtheorem{thm}{Theorem}[section]
\newtheorem{defn}[thm]{Definition}
\newtheorem{rema}[thm]{Remark}
\title{\bf Explicit Formulas for the First Form $(q,r)$-Dowling Numbers and $(q,r)$-Whitney-Lah Numbers}
\author{{\large\bf Roberto B. Corcino$^{1,2}$}\\{\large\bf Jay M. Ontolan$^{1,2}$}\\ {\large\bf Maria Rowena S. Lobrigas$^{2}$}\\
$^{1}$Research Institute for Computational Mathematics and Physics\\
Cebu Normal University, Cebu City, Philippines 6000\\
$^{2}$Department of Mathematics\\ Cebu Normal University, Cebu City, Philippines 6000
}
\date{}
\begin{document}

\maketitle

\begin{abstract}
	In this paper, a $q$-analogue of $r$-Whitney-Lah numbers, also known as $(q,r)$-Whitney-Lah number, denoted by $L_{m,r}[n,k]_q$ is defined using the triangular recurrence relation. Several fundamental properties for the $q$-analogue are established such as vertical and horizontal recurrence relations, horizontal and exponential generating functions. Moreover, an explicit formula for $(q,r)$-Whitney-Lah number is derived using the concept of $q$-difference operator, particularly, the $q$-analogue of Newton's Interpolation Formula (the umbral version of Taylor series). Furthermore, an explicit formula for the first form  $(q,r)$-Dowling numbers is obtained which is expressed in terms of $(q,r)$-Whitney-Lah numbers and $(q,r)$-Whitney numbers of the second kind.

\bigskip
\noindent{\bf Keywords}: $r$-Whitney-Lah numbers, $r$-Whitney numbers, $r$-Dowling numbers,  generating function, $q$-exponential function, $q$-difference operator, Newton's Interpolation Formula.
\end{abstract}

\section{Introduction}
The Lah numbers $L(n,k)$ are the connection constants between the rising factorial and falling factorial polynomial bases and count partitions of $n$ distinct objects into $k$ blocks, where objects within a block are ordered (termed Laguerre Configurations) \cite{lindsay}. The classical (signless) Lah numbers $L(n,k)=\frac{n!}{k!}(^{n-1}_{k-1})$ may be expressed in terms of the Stirling numbers $s(n,k)$ and $S(n,k)$ of the first and second kind, respectively:
\begin{equation}
L(n,k)=\sum_{j=k}^{n}s(n,j)S(j,k).
\end{equation}
Cheon and Jung \cite{cheon} defined the $r$-Whitney-Lah numbers $L_{m,r}(n,k)$ in terms of the $r$-Whitney numbers of the first kind $w_{m,r}(n,k)$ and second kind $W_{m,r}(n,k)$
\begin{equation}
L_{m,r}(n,k)=\sum_{j=k}^{n}w_{m,r}(n,j)W_{m,r}(j,k),
\end{equation}
which generalizes the identity for the Lah numbers in terms of the Stirling numbers of the first kind $s(n,k)$ and second kind $S(n,k)$.

A $q$-analogue is a mathematical expression parameterized by a quantity $q$ that generalizes a known expression and reduces to the known expression in the limit, as $q$ approaches $1$.

R.B. Corcino et al\cite{cor4} established a $q$-analogue of $S_{n,k}(a)$ denoted by $ \sigma{[n,k]^{\beta, r}_q}$, defined by
$$\sigma{[n,k]^{\beta, r}_q}= \sigma{[n-1,k-1]^{\beta, r}_q}+([k\beta]_q+[r]_q)\sigma{[n-1,k]^{\beta, r}_q}$$
and obtained some properties including vertical and horizontal recurrence relation, horizontal generating function, explicit formula, as well as exponential generating function, rational generating function, and explicit formula in homogeneous symmetric form. It is well-known that the Rucinski-Voigt numbers are also known as
r-Whitney numbers of the second kind, denoted by $W_{m,r}(n,k)$ by Mezo\cite{mezo}.

Corcino and Montero \cite{cor4} defined a $q$-analogue of $r$-Whitney numbers of the second kind which are exactly the same numbers with the Rucinski-Voigt numbers, in a form  of triangular recurrence relation. 

On the other hand, Cheon and Jung \cite{cheon} defined the $r$-Whitney-Lah numbers, denoted by $L_{m,r}(n,k)$, in terms of the $r$-Whitney numbers of the first kind $w_{m,r}(n,k)$ and the second kind $W_{m,r}(n,k)$
\begin{equation*}
L_{m,r}(n,k)=\sum^n_{j=k}w_{m,r}(n,j)W_{m,r}(j,k),
\end{equation*}
which generalizes the identity for the Lah numbers in terms of the Stirling numbers of the first kind and the second kind
\begin{equation*}
(-1)^nL(n,k)=\sum^n_{j=k}(-1)^js(n,j)S(j,k).
\end{equation*}

Other properties for $L_{m,r}(n,k)$ were established which include its horizontal generating function
\begin{equation*}
\langle x+2r|m \rangle _n=\sum^n_{k=0}L_{m,r}(n,k)(x|m)_k,
\end{equation*}
where 

$$\langle x+2r|m \rangle _n=\begin{cases}
(x+2r)\ldots (x+2r+(n-1)m), \ \ \ n\geq 1\\
0, \ \ \ \ \ \ \ \ \ \ \ \ \ \ \ \ \ \ \ \ \ \ \ \ \ \ \ \ \ \ \ \ \ \ \ \ \ \ \ \ \ \ \ n=0.
\end{cases}$$

and the triangular recurrence relation
\begin{equation*}
L_{m,r}(n,k)=L_{m,r}(n-1,k-1)+(2r+(n+k-1)m)L_{m,r}(n-1,k),
\end{equation*}
with $L_{m,r}(n,n)=1$ for $n\geq0$ and $L_{m,r}(n,n)=0$ for $n<k$, or $n,k <0$. We can use the triangular recurrence relation to generate the first values of $L_{m,r}(n,k)$.

Cillar and Corcino \cite{cillar} defined the $q$-Analogue of $r$-Whitney-Lah numbers which is parallel to Cheon and Jung's definition of $r$-Whitney-Lah numbers
\begin{equation*}
L_{\beta,r}[n,k]_q=\sum^n_{j=k}\phi_{\beta,r}[n,j]_q\sigma[j,k]^{\beta,r}_q.
\end{equation*}
where $\phi_{\beta,r}[n,j]_q$ and $\sigma[j,k]^{\beta,r}_q$ is equivalent to $w_{\beta,r}[n,j]_q$ and $W_{\beta,r}[n,j]_q$, respectively.

They also obtained some combinatorial properties. The following are the results:
\begin{enumerate}
	\item \textit{Horizontal generating function}
	\begin{equation*}
	\langle t+2[r]_q|[\beta]_q\rangle_n=\sum^n_{k=0}L_{\beta,r}[n,k]_q(t|[\beta]_q)_k
	\end{equation*}
	
	where

	$$\langle t+2[r]_q|[\beta]_q\rangle_n=\begin{cases} \Pi ^{n-1}_{i=0}(t+2[r]_q+[i\beta]_q), \ \ \ n>0,\\
		1, \ \ \ \ \ \ \ \ \ \ \ \ \ \ \ \ \ \ \ \ \ \ \ \ \ \ \ \ \ \ n=0.
\end{cases}$$	
			
	\item \textit{Recurrence relation}
	\begin{equation*}
	L_{\beta,r}[n,k]_q=L_{\beta,r}[n-1,k-1]_q+(2[r]_q+[k\beta]_q+[(n-1)\beta]_q)L_{\beta,r}[n-1,k]_q,
	\end{equation*}
	with $L_{\beta,r}[0,0]_q=1$ and $L_{\beta,r}[n,k]_q=0$ for $n<k$ or $n,k<0$.
\end{enumerate}

Recently, a $q$-analogue of $r$-Whitney numbers of the second kind $W_{m,r}[n,k]_q$, also known as $(q,r)$-Whitney number of the second kind, was introduced in \cite{latayada, jen} by means of the following triangular recurrence relation
\begin{equation}\label{qrDN}
W_{m,r}[n,k]_{q}= q^{m(k-1)-r}W_{m,r}[n-1,k-1]_q+[mk-r]_{q} W_{m,r}[n-1,k]_{q}.
\end{equation}
From this definition, two more forms of the $q$-analogue were defined in \cite{jen, latayada} as
\begin{align}
W^*_{m,r}[n,k]_q&:=q^{-kr-m\binom{k}{2}}W_{m,r}[n,k]_q\label{2ndform}\\
\widetilde{W}_{m,r}[n,k]_q&:=q^{kr}W^*_{m,r}[n,k]_q=q^{-m\binom{k}{2}}W_{m,r}[n,k]_q,\label{3rdform}
\end{align}
where $W^*_{m,r}[n,k]_q$ and $\widetilde{W}_{m,r}[n,k]_q$ denote the second and third forms of the $q$-analogue, respectively. Corresponding to the $q$-analogues in equations \eqref{qrDN}, \eqref{2ndform} and \eqref{3rdform}, three forms of $q$-analogues for $r$-Dowling numbers (also known as $(q,r)$-Dowling numbers) were also defined as follows:
\begin{align}
{D}_{m,r}[n]_q&:=\sum_{k=0}^nW_{m,r}[n,k]_q\label{1stformqDow}\\
{D}^*_{m,r}[n]_q&:=\sum_{k=0}^nW^*_{m,r}[n,k]_q\label{2ndformqDow}\\
\widetilde{D}_{m,r}[n]_q&:=\sum_{k=0}^n\widetilde{W}_{m,r}[n,k]_q.\label{3rdformqDow}
\end{align}
The focus in this paper is on the first form of the $q$-analogue. The following are the results obtained for the first form $(q,r)$-Whitney numbers of the second kind:
\begin{enumerate}
    	\item \textit{Vertical and Horizontal Recurrence Relations}
	\begin{equation}
	W_{m,r}[n+1,k+1]_q=q^{mk+r}\sum_{j=k}^{n}[m(k+1)+r]^{n-j}_qW_{m,r}[j,k]_q
	\end{equation}
	\begin{equation}
	W_{m,r}[n,k]_q=\sum_{j=0}^{n-k}(-1)^jq^{-r-m(k+j)}\frac{r_{k+j+1,q}}{r_{k+1,q}}W_{m,r}[n+1,k+j+1]_q
	\end{equation}
	\noindent respectively, where
	\begin{equation*}
	r_{i,q}=\prod_{h=1}^{i-1}q^{-r-mh+m}[mh+r]_q
	\end{equation*}
	\noindent with initial value $W_{m,r}[0,0]_q=1.$
	
	\item \textit{Horizontal Generating Function}
	\begin{equation}\label{defn_qWNSK}
	\sum_{k=0}^{n}W_{m,r}[n,k]_q[t-r|m]_{k,q}=[t]^n_q.
	\end{equation}
	
	\item \textit{Explicit Formula} 
	\begin{equation}
	W_{m,r}[n,k]_q=\frac{1}{[k]_{q^m}![m]^k_q}\sum_{j=0}^{k}(-1)^{k-j}q^{m\binom{k-j}{2}}\bracketed{k}{j}_{q^m}[jm+r]^n_q. 
	\end{equation}
	
	\item \textit{Exponential Generating Function}
	\begin{equation}\label{exp_qrWhit}
	\sum_{n\geq 0}W_{m,r}[n,k]_q\frac{[t]^n_q}{[n]_q!}=\frac{1}{[k]_{q^m}![m]^k_q}[\Delta_{q^m,m^{k}}e_q([x+jm+r]_q[t]_q)]_{x=0}.
	\end{equation}
	
	\item \textit{Rational Generating Function}
	\begin{equation}\label{rationalGF}
	\Psi_k(t)=\sum_{n\geq k}W_{m,r}[n,k]_q[t]^n_q=\frac{q^{m\binom{k}{2}+kr}[t]^k_q}{\prod_{j=0}^{k}(1-[mj+r]_q[t]_q)}.
	\end{equation}
\end{enumerate}
The purpose of introducing these new $q$-analogues of $r$-Whitney and $r$-Dowling numbers is to address the conjecture in the paper of Corcino-Corcino \cite{Cor-Cor1}. That is, to establish the Hankel transform of the $q$-analogue of $(r,\beta)$-Stirling numbers, which are exactly the $r$-Dowling numbers. In establishing the Hankel transforms of these three forms of $(q,r)$-Dowling numbers, the Hankel transform of the third form was the first one being established and followed by the second form. For the first form, it is still on the process of constructing the method that can possibly be used to derive it.

\smallskip
In this paper,  $(q,r)$-Whitney-Lah numbers will be introduced and properties of these  number will be establshed using the approach employed in \cite{jen}. The method used in the paper of Cillar and Corcino \cite{cillar} will also be used to obtain the properties of the $(q,r)$-Whitney-Lah numbers. Moreover, this paper is concluded by deriving an explicit formula for the first form $(q,r)$-Dowling numbers expressed in terms of $(q,r)$-Whitney-Lah numbers and $(q,r)$-Whitney numbers of the second kind.

\bigskip
\section{$(q,r)$-Whitney-Lah Numbers and Their Recurrence Relations}
The definition of the $q$-analogue of $r$-Whitney-Lah numbers, also known as the $(q,r)$-Whitney-Lah numbers, is given as follows.

\smallskip
\begin{defn}
\normalfont The $(q,r)$-Whitney-Lah numbers $L_{m,r}[n,k]_q$ are defined by
\begin{equation}
L_{m,r}[n,k]_q=q^{2r+m(k-1)+m(n-1)} L_{m,r}[n-1,k-1]_q+[2r+km+(n-1)m]_q L_{m,r}[n-1,k]_q,\label{3.1}
\end{equation}
with
\[ L_{m,r}[n,k]_q= \left\{ 		\begin{array}{rll}
		0 \mbox & {if} & n<k \ \ or \ \ n, k <0,\\
		1 \mbox & {if} & n=k \ \ and \ \ n\geq 0.
		\end{array}\right.
	\]
and
$$[t-k]_q=\frac{1}{q^k}([t]_q-[k]_q).$$
\end{defn}

\smallskip
\begin{rema}\rm
When $q\rightarrow1$, we obtained the triangular recurrence relation of $r$-Whitney-Lah numbers, $L_{m,r}(n,k)$, defined by Cheon and Jung \cite{cheon}:
$$L_{m,r}(n,k)=L_{m,r}(n-1,k-1)+(2r+km+(n-1)m)L_{m,r}(n-1,k)$$
with $L_{m,r}(n,k)=1$ for $n\geq 0$ and $n=k$,\\
and $L_{m,r}(n,k)=0$ for $n<k$ or $n,k <0$.
\end{rema}

\smallskip
\begin{rema}\rm 
It can easily be verified that
$$L_{m,r}[n,0]=[2r+(n-1)m]^n_q$$
\end{rema}

By applying (\ref{3.1}), we can easily obtain two other forms of recurrence relations and generating function.

\begin{thm}
\normalfont A $q$-analogue of $r$-Whitney-Lah numbers, $L_{m,r}[n,k]_q$, satisfies the following vertical recurrence relations,
$$L_{m,r}[n+1,k+1]_q=\sum_{j=k}^nq^{[2r+mk+m(n-j)]}\left(\prod_{i=0}^k [2r+(k+1)m+(n-i)m]_q\right) L_{m,r}[j,k]_q$$
with initial value $L_{m,r}[0,0]_q=1$.
\end{thm}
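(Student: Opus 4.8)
The plan is to obtain the vertical recurrence by iterating the triangular recurrence (\ref{3.1}) in the first index while keeping the column index fixed. First I would shift the indices in (\ref{3.1}), replacing $(n,k)$ by $(n+1,k+1)$, to get
\begin{equation*}
L_{m,r}[n+1,k+1]_q = q^{2r+mk+mn}\,L_{m,r}[n,k]_q + [2r+(k+1)m+nm]_q\,L_{m,r}[n,k+1]_q .
\end{equation*}
Read as a relation in the first argument with the column $k+1$ fixed, this is a first-order linear recurrence for the sequence $A_\ell := L_{m,r}[\ell,k+1]_q$: the factor $[2r+(k+1)m+\ell m]_q$ acts as the multiplier and $q^{2r+mk+m\ell}\,L_{m,r}[\ell,k]_q$ as the inhomogeneous term.

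Next I would solve this recurrence by repeated back-substitution. Expanding $L_{m,r}[n,k+1]_q$ by the same shifted relation, then $L_{m,r}[n-1,k+1]_q$, and so on down to the boundary value $L_{m,r}[k,k+1]_q=0$, each step liberates one term $q^{2r+mk+mj}\,L_{m,r}[j,k]_q$ together with the product of all multipliers $[2r+(k+1)m+im]_q$ accumulated at the levels above $j$. Summing these contributions produces a sum over $j$, and the lower limit is forced to $j=k$ because $L_{m,r}[j,k]_q=0$ whenever $j<k$. I would then record the telescoped product in closed form using the substitution $i\mapsto n-i$, which converts the accumulated factors into a product of the shape displayed in the statement.

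To make this precise I would cast the iteration as an induction on $n$ with $k$ held fixed. The base case $n=k$ collapses the sum to its single diagonal term, and the inductive step inserts the hypothesis for $L_{m,r}[n,k+1]_q$ into the shifted recurrence; the freshly separated $j=n$ summand together with the reindexed multiplier then reassembles the formula at level $n+1$.

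The step I expect to be the main obstacle is the bookkeeping: pairing the correct $q$-power with each summand and checking that the telescoping product of the $[2r+(k+1)m+im]_q$ factors reindexes exactly into the product appearing in the claim. Since the range of that product is governed by $j$ through the back-substitution, the index limits must be tracked with care, and the diagonal contribution must be verified for consistency with the initial value $L_{m,r}[0,0]_q=1$.
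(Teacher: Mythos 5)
Your plan is essentially identical to the paper's proof: shift (\ref{3.1}) to indices $(n+1,k+1)$ and back-substitute repeatedly in the first argument with the column fixed, the only cosmetic difference being that you terminate at the zero boundary $L_{m,r}[k,k+1]_q=0$ while the paper stops one level earlier at $L_{m,r}[k+1,k+1]_q=L_{m,r}[k,k]_q=1$. Your flagged worry about index bookkeeping is justified: the back-substitution attaches to the $L_{m,r}[j,k]_q$ term the coefficient $q^{2r+mk+mj}\prod_{\ell=j+1}^{n}[2r+(k+1)m+\ell m]_q$, a product of $n-j$ factors whose range depends on $j$ (as the paper's own intermediate expansion in fact shows), whereas the theorem as displayed writes a $q$-exponent $2r+mk+m(n-j)$ and a product with $j$-independent limits $\prod_{i=0}^{k}$, so executing your plan carefully would reproduce the paper's derivation but yield the corrected form of the displayed formula rather than its printed version.
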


\begin{proof}
We replace $n$ by $n+1$ and $k$ by $k+1$, then using (\ref{3.1}), we have
\begin{equation*}
L_{m,r}[n+1,k+1]_q=q^{2r+mk+mn}L_{m,r}[n,k]_q+[2r+(k+1)m+(n)m]_qL_{m,r}[n,k+1]_q
\end{equation*}

Applying (\ref{3.1}) repeatedly we have,
\begin{align*}
L_{m,r}[n+1,k+1]_q&=q^{2r+mk+mn}L_{m,r}[n,k]_q+[2r+(k+1)m+nm]_q \\
&\;\;\;\;\;\;\lbrace q^{2r+mk+m(n-1)}L_{m,r}[n-1,k]_q+[2r+(k+1)m+(n-1)m]_qL_{m,r}[n-1,k+1]_q\rbrace\\
&=q^{2r+mk+mn}L_{m,r}[n,k]_q+q^{2r+mk+m(n-1)}[2r+(k+1)m+nm]_qL_{m,r}[n-1,k]_q\\
&\;\;\;\;\;\;+q^{2r+mk+m(n-2)}[2r+(k+1)m+nm]_q[2r+(k+1)m+(n-1)m]_qL_{m,r}[n-2,k]_q\\
&\;\;\;\;\;\;+\ldots+ \ q^{2r+mk+m(k+1)} \lbrace [2r+(k+1)m+nm]_q[2r+(k+1)m+(n-1)m]_q\\
&\;\;\;\;\;\;\ldots [2r+(k+1)m+(n-k)m]_q \rbrace L_{m,r}[k+1,k+1]_q
\end{align*}
Using the fact that $L_{m,r}[k+1,k+1]_q=L_{m,r}[k,k]_q$ we have,
$$L_{m,r}[n+1,k+1]_q=\sum_{j=k}^nq^{[2r+mk+m(n-j)]}\left(\prod_{i=0}^k[2r+(k+1)m+(n-i)m]_q\right)L_{m,r}[j,k]_q$$
\end{proof}

\begin{thm}
\normalfont A $q$-analogue $L_{m,r}[n,k]_q$ satisfies the horizontal recurrence relation
$$L_{m,r}[n,k]_q=\sum_{j=0}^{n-k}(-1)^j\ q^{-2r-m(k+j)-nm}\ \frac{r_{k+j+1,q}}{r_{k+1,q}}\ L_{m,r}[n+1,k+j+1]_q$$
where
$$r_{i,q}=\prod_{h=1}^{i-1}q^{-2r-mh-nm+m}[mh+2r+nm]_q$$
with initial value $L_{m,r}[0,0]_q=1$.
\end{thm}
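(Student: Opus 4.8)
The plan is to invert the triangular recurrence \eqref{3.1} so that $L_{m,r}[n,k]_q$ is expressed through entries of the next row $n+1$, exactly mirroring the derivation of the horizontal recurrence for the $(q,r)$-Whitney numbers recalled in the introduction. First I would replace $n$ by $n+1$ and $k$ by $k+1$ in \eqref{3.1} to obtain
$$L_{m,r}[n+1,k+1]_q = q^{2r+mk+mn}L_{m,r}[n,k]_q + [2r+(k+1)m+nm]_q\,L_{m,r}[n,k+1]_q,$$
and then solve for the diagonal-shifted term, giving the crucial one-step identity
$$L_{m,r}[n,k]_q = q^{-2r-mk-mn}\Bigl(L_{m,r}[n+1,k+1]_q - [2r+(k+1)m+nm]_q\,L_{m,r}[n,k+1]_q\Bigr).$$
This trades $L_{m,r}[n,k]_q$ for the next-row entry $L_{m,r}[n+1,k+1]_q$ plus a single same-row successor $L_{m,r}[n,k+1]_q$.

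Next I would iterate this identity on the residual same-row term: substituting the analogous expression for $L_{m,r}[n,k+1]_q$, then for $L_{m,r}[n,k+2]_q$, and so on. Writing $a_{k'}=q^{-2r-mk'-mn}$ and $b_{k'}=[2r+(k'+1)m+nm]_q$, the expansion produces an alternating sum in which the coefficient of $L_{m,r}[n+1,k+j+1]_q$ is $(-1)^j\bigl(\prod_{i=0}^{j}a_{k+i}\bigr)\bigl(\prod_{i=0}^{j-1}b_{k+i}\bigr)$. The recursion terminates because the same-row successor eventually becomes $L_{m,r}[n,n+1]_q$, which vanishes by the boundary condition $L_{m,r}[n,k]_q=0$ for $n<k$; this is precisely what fixes the upper summation limit at $j=n-k$, the final term being $L_{m,r}[n+1,n+1]_q$. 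I would make the iteration rigorous by induction on $n-k$, carrying the accumulated partial sum as the induction hypothesis.

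The last step, and the main bookkeeping obstacle, is to show that the accumulated coefficient collapses into the stated closed form $(-1)^j q^{-2r-m(k+j)-nm}\,r_{k+j+1,q}/r_{k+1,q}$. Here I would use the definition $r_{i,q}=\prod_{h=1}^{i-1}q^{-2r-mh-nm+m}[mh+2r+nm]_q$ to recognize that the telescoping ratio $r_{k+j+1,q}/r_{k+1,q}=\prod_{h=k+1}^{k+j}q^{-2r-mh-nm+m}[mh+2r+nm]_q$ reproduces exactly the product $\prod_{i=0}^{j-1}b_{k+i}$ upon the substitution $h=k+i+1$. The only genuine computation is then matching the powers of $q$: one must compare the exponent of $\prod_{i=0}^{j}a_{k+i}$, namely $\sum_{i=0}^{j}\bigl(-2r-m(k+i)-mn\bigr)$, against the exponent contributed by the front factor $q^{-2r-m(k+j)-nm}$ together with $\sum_{h=k+1}^{k+j}(-2r-mh-nm+m)$. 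Evaluating these two arithmetic-progression sums and checking that the quadratic-in-$j$ and linear-in-$j$ terms agree completes the identification and hence the proof.
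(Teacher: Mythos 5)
Your proposal is correct and is essentially the paper's own proof run in reverse: the paper verifies the stated identity by expanding each $L_{m,r}[n+1,k+j+1]_q$ on the right-hand side via the triangular recurrence \eqref{3.1} and telescoping (using $r_{k+j+1,q}=r_{k+j,q}\,q^{-2r-m(k+j)-nm+m}[m(k+j)+2r+nm]_q$), whereas you solve \eqref{3.1} for $L_{m,r}[n,k]_q$ and iterate forward with induction on $n-k$, so the one-step identity and the cancellation structure are the same, merely organized derivationally rather than verificationally. Your final bookkeeping step does check out --- the exponent $\sum_{i=0}^{j}\bigl(-2r-m(k+i)-mn\bigr)$ and the exponent $-2r-m(k+j)-nm+\sum_{h=k+1}^{k+j}(-2r-mh-nm+m)$ both equal $-(j+1)(2r+mk+mn)-m\binom{j+1}{2}$ --- and your termination at $L_{m,r}[n,n+1]_q=0$ with final term $L_{m,r}[n+1,n+1]_q$ matches the paper's implicit application of \eqref{3.1} at the diagonal.
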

\begin{proof}
Applying (\ref{3.1}) we have,
\begin{align*}
RHS&=\sum_{j=0}^{n-k}(-1)^jq^{-2r-m(k+j)-nm} \frac{r_{k+j+1,q}}{r_{k+1,q}}\\
&\;\;\;\{q^{2r+m(k+j+1-1)+m(n+1-1)}L_{m,r}[n,k+j]_q+[2r+(k+j+1)m+nm]_qL_{m,r}[n,k+j+1]_q\}\\
&=\sum_{j=0}^{n-k}(-1)^j \frac{r_{k+j+1,q}}{r_{k+1,q}} L_{m,r}[n,k+j]_q\\
&\;\;\;\;\;\;+\sum_{j=1}^{n-k+1}(-1)^{j-1}q^{-2r-m(k+j-1)-nm} \frac{r_{k+j,q}}{r_{k+1,q}} [2r+(k+j)m+nm]_qL_{m,r}[n,k+j]_q\\
&=\sum_{j=0}^{n-k}(-1)^j \frac{r_{k+j+1,q}}{r_{k+1,q}} L_{m,r}[n,k+j]_q+\sum_{j=1}^{n-k}(-1)^{j-1}q^{-2r-m(k+j-1)-nm}\\
&\;\;\;\;\;\frac{\prod_{h=1}^{k+j-1}q^{-2r-mh-nm+m}[mh+2r+nm]_q}{\prod_{h=1}^{k}q^{-2r-mh-nm+m}[mh+2r+nm]_q} [2r+(k+j)m+nm]_qL_{m,r}[n,k+j]_q
\end{align*}
Note that
\begin{align*}
&\prod_{h=1}^{k+j-1}q^{-2r-mh-nm+m}[mh+2r+nm]_q=(q^{-2r-m(1)-nm+m}[m(1)+2r+nm]_q)\\
&\;\;\;\;\;\;\;\;\;\;\;\;\;\;\;(q^{-2r-m(2)-nm+m}[m(2)+2r+nm]_q)(q^{-2r-m(3)-nm+m}[m(3)+2r+nm]_q)\\
&\;\;\;\;\;\;\;\;\;\;\;\;\;\;\;\;\;\;\;\;\ldots (q^{-2r-m(k+j-1)-nm+m}[m(k+j-1)+2r+nm]_q)
\end{align*}
Now, simplifying the RHS we have,
\begin{align*}
RHS&=\sum_{j=0}^{n-k}(-1)^j\ \frac{r_{k+j+1,q}}{r_{k+1,q}}\ L_{m,r}[n,k+j]_q+\frac{1}{{\prod_{h=1}^{k}q^{-2r-mh-nm+m}[mh+2r+nm]_q}}\\
&=\sum_{j=0}^{n-k}(-1)^j\ \frac{r_{k+j+1,q}}{r_{k+1,q}}\ L_{m,r}[n,k+j]_q+\frac{1}{{\prod_{h=1}^{k}q^{-2r-mh-nm+m}[mh+2r+nm]_q}}\\
&\;\;\;\;\;\;\sum_{j=1}^{n-k}(-1)^{j-1}\{(q^{-2r-m(1)-nm+m}[m(1)+2r+nm]_q)(q^{-2r-m(2)-nm+m}[m(2)+2r+nm]_q)\\
&\;\;\;\;\;\;(q^{-2r-m(3)-nm+m}[m(3)+2r+nm]_q)\ldots (q^{-2r-m(k+j-1)-nm+m}[m(k+j-1)+2r+nm]_q)\}\\
&\;\;\;\;\;\;q^{-2r-m(k+j)-nm+m}[2r+(k+j)m+nm]_qL_{m,r}[n,k+j]_q\\
&=\sum_{j=0}^{n-k}(-1)^j\ \frac{r_{k+j+1,q}}{r_{k+1,q}}\ L_{m,r}[n,k+j]_q+\sum_{j=1}^{n-k}(-1)^{j-1} \frac{r_{k+j+1,q}}{r_{k+1,q}} \ L_{m,r}[n,k+j]_q\\
&=\frac{r_{k+1,q}}{r_{k+1,q}} \ L_{m,r}[n,k]_q=L_{m,r}[n,k]_q
\end{align*}
\end{proof}

\section{Explicit Formula and Generating Functions}	
This property is necessary to obtain the exponential generating function and explicit formula of $L_{m,r}[n,k]_q$. The following theorem contains the horizontal generating function for $L_{m,r}[n,k]_q$.

\begin{thm}
A horizontal generating function of $L_{m,r}[n,k]_q$ is given by
\begin{equation}
\sum_{k=0}^n \ L_{m,r}[n,k]_q \ [t \vert m]_{k,q} = [t+2r \vert m]_{\overline{n},q} \label{4.1}
\end{equation}
where
\[ [t+2r \vert m]_{\overline{n},q}= \left\{ 		\begin{array}{rll}
		\prod^{n-1}_{i=0}[t+2r+im]_q, \mbox & {if} & n\geq1,\\
		1, \ \ \ \ \ \ \ \ \ \ \ \ \ \ \ \ \ \ \ \mbox & {if} & n=0.
		\end{array}\right.
	\]
\end{thm}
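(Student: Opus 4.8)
The plan is to prove \eqref{4.1} by induction on $n$, following the standard method for horizontal generating functions attached to a triangular array. Throughout I will use that $[t\vert m]_{k,q}=\prod_{i=0}^{k-1}[t-im]_q$ is the $q$-falling factorial with increment $m$, so that it obeys the one-step recursion
$$[t\vert m]_{k+1,q}=[t-km]_q\,[t\vert m]_{k,q},$$
together with the elementary $q$-number rule $[x+y]_q=[x]_q+q^x[y]_q$ and the convention $[t-km]_q=q^{-km}([t]_q-[km]_q)$ that accompanies the recurrence \eqref{3.1}.

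For the base case $n=0$ both sides equal $1$, since $L_{m,r}[0,0]_q=1$ and $[t\vert m]_{0,q}=1$. Assuming the formula for $n-1$, I would expand the left-hand side for $n$ by substituting \eqref{3.1}, and split the result into the two sums arising from the $L_{m,r}[n-1,k-1]_q$ and $L_{m,r}[n-1,k]_q$ terms. In the first sum I shift the index $k\mapsto k+1$; the boundary contributions vanish because $L_{m,r}[n-1,-1]_q=0$ and $L_{m,r}[n-1,n]_q=0$, so both sums may be taken from $k=0$ to $k=n-1$. Using the one-step recursion to replace $[t\vert m]_{k+1,q}$ by $[t-km]_q\,[t\vert m]_{k,q}$, the common factor $[t\vert m]_{k,q}$ pulls out of the combined sum, leaving the coefficient
$$q^{2r+mk+m(n-1)}[t-km]_q+[2r+km+m(n-1)]_q.$$

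The crux of the argument is to show that this coefficient collapses to the $k$-free quantity $[t+2r+m(n-1)]_q$. Writing $A=2r+m(n-1)$ and applying $[t-km]_q=q^{-km}([t]_q-[km]_q)$ turns the first term into $q^{A}([t]_q-[km]_q)$, while expanding $[A+km]_q=[A]_q+q^A[km]_q$ in the second term cancels the $q^{A}[km]_q$ contributions and leaves $q^A[t]_q+[A]_q=[t+A]_q$, exactly as required. I expect this $q$-number cancellation to be the only delicate point: it is precisely the mechanism by which the prefactor $q^{2r+m(k-1)+m(n-1)}$ in \eqref{3.1} has been tuned so that all $k$-dependence disappears. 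Once the coefficient is recognized as $[t+2r+m(n-1)]_q$, I would factor it out, apply the inductive hypothesis in the form $\sum_{k=0}^{n-1}L_{m,r}[n-1,k]_q[t\vert m]_{k,q}=[t+2r\vert m]_{\overline{n-1},q}$, and finish by observing that $[t+2r+m(n-1)]_q\,[t+2r\vert m]_{\overline{n-1},q}=[t+2r\vert m]_{\overline{n},q}$, which closes the induction.
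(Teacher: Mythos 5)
Your proposal is correct and takes essentially the same route as the paper: induction on $n$, substitution of the triangular recurrence \eqref{3.1}, an index shift using the vanishing boundary terms, and the collapse of the coefficient $q^{2r+mk+m(n-1)}[t-km]_q+[2r+km+m(n-1)]_q$ to the $k$-free factor $[t+2r+m(n-1)]_q$. The only cosmetic difference is that the paper effects this cancellation by writing $[t-km]_q=q^{-(2r+km+nm)}\left([t+2r+nm]_q-[2r+km+nm]_q\right)$ in one stroke, whereas you reach the identical conclusion via $[t-km]_q=q^{-km}([t]_q-[km]_q)$ together with the addition rule $[x+y]_q=[x]_q+q^x[y]_q$.
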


\begin{proof}
To prove this, we will use induction on n.
We now verify that (\ref{4.1}) holds when n=0.
$$L_{m,r}[0,0]_q \ [t \vert m]_{0,q}=1 \cdot 1=1=[t+2r \vert m]_{\overline{0},q}$$.
Suppose that it is true for some $n>0$. Then,
$$\sum^n_{k=0}L_{m,r}[n,k]_q \ [t \vert m]_{k,q}=[t+2r \vert m]_{\overline{n},q}$$.
Next, we show that
$$\sum^{n+1}_{k=0}L_{m,r}[n+1,k]_q \ [t \vert m]_{k,q}=[t+2r \vert m]_{\overline{n+1},q}$$
Using (\ref{3.1})  and the fact that. $[t \vert m]_{k+1,q}=[t \vert m]_{k,q}[t-km]_q$, we get
\begin{equation*}
\sum^{n+1}_{k=0}L_{m,r}[n+1,k]_q \ [t \vert m]_{k,q}=\sum^{n+1}_{k=0} \{ q^{2r+m(k-1)+mn}L_{m,r}[n,k-1]_q+[2r+km+nm]_q L_{m,r}[n,k]_q \}[t \vert m]_{k,q}
\end{equation*}
\begin{align*}
&\;\;\;\;\;\;\;\;\;\;\;\;=\sum^{n+1}_{k=0} q^{2r+m(k-1)+mn}L_{m,r}[n,k-1]_q [t \vert m]_{k,q}+ \sum^{n+1}_{k=0} [2r+km+nm]_q L_{m,r}[n,k]_q [t \vert m]_{k,q}\\
&\;\;\;\;\;\;\;\;\;\;\;\;=\sum^{n}_{k=0} q^{2r+mk+mn}L_{m,r}[n,k]_q [t \vert m]_{k+1,q}+ \sum^{n}_{k=0} [2r+km+nm]_q L_{m,r}[n,k]_q [t \vert m]_{k,q}\\
&\;\;\;\;\;\;\;\;\;\;\;\;=\sum^{n}_{k=0} q^{2r+mk+mn}L_{m,r}[n,k]_q [t \vert m]_{k,q}[t-km]_q+ \sum^{n}_{k=0} [2r+km+nm]_q L_{m,r}[n,k]_q [t \vert m]_{k,q}
\end{align*}
Note that $[t-k]_q= \frac{1}{q^k}([t]_q-[k]_q)$. Since $[t-km]_q=[t+2r+nm-2r-nm-km]_q=[(t+2r+nm)-(2r+km+nm)]_q$, then
$$[t-km]_q=\frac{1}{q^{2r+mk+mn}}([t+2r+nm]_q-[2r+km+nm]_q)$$.
Thus, 
\begin{align*}
\sum^{n+1}_{k=0}L_{m,r}[n+1,k]_q \ [t \vert m]_{k,q}&=\sum^{n}_{k=0} q^{2r+mk+mn}L_{m,r}[n,k]_q [t \vert m]_{k,q}
\end{align*}
$$\{q^{-(2r+mk+mn)}([t+2r+nm]_q-[2r+km+nm]_q)\}+ \sum^{n}_{k=0} [2r+km+nm]_q L_{m,r}[n,k]_q [t \vert m]_{k,q}$$
\begin{align*}
&=\sum^{n}_{k=0} L_{m,r}[n,k]_q [t \vert m]_{k,q}([t+2r+nm]_q-[2r+km+nm]_q)
\end{align*}
$$+ \sum^{n}_{k=0} [2r+km+nm]_q L_{m,r}[n,k]_q [t \vert m]_{k,q}$$
\begin{align*}
\ \ \ \ \ \ \ \ \ \ \ \ &=\sum^{n}_{k=0} L_{m,r}[n,k]_q [t \vert m]_{k,q} \{[t+2r+nm]_q-[2r+km+nm]_q + [2r+km+nm]_q \}
\end{align*}
\begin{align*}
&=[t+2r \vert m]^n_q [t+2r+nm]_q \ \ \ \ \ \ \ \ \ \ \ \ \ \ \ \ \ \ \ \ \ \ \ \ \ \ \ \ \ \ \ \ \ \ \ \ \ \ \ \ \ \\
&=[t+2r \vert m]_{\overline{n+1},q} \ \ \ \ \ \ \ \ \ \ \ \ \ \ \ \ \ \ \ \ \ \ \ \ \ \ \ \ \ \ \ \ \ \ \ \ \ \ \ \ \ 
\end{align*}
This proves the theorem.
\end{proof}

Next, we established the explicit formula for $L_{m,r}[n,k]_q$ using the horizontal generating function of $L_{m,r}[n,k]_q$. Also, the exponential generating function for $L_{m,r}[n,k]_q$ is obtained using the explicit formula.

The explicit formula for the $q$-difference operator is as follows
\begin{equation}
\bigtriangleup^h_{q,k}f(x)=\sum^k_{j=0}(-1)^{k-j}q^{(^{k-j}_2)}[^k_j]f(x+jh). \label{4.2.1}
\end{equation}
The new $q$-analogue of Newton's Interpolation Formula in \cite{kim} is given by
$$f_q(x)=a_0+a_1[x-x_o]_q+\ldots+a_m[x-x_o]_q[x-x_1]_q\ldots[x-x_{m-1}]_q,$$
which is equivalent to
\begin{equation*}
f_q(x)=f_q(x_0)+\frac{\triangle_{q^h,h}f_q(x_0)[x-x_0]_q}{[1]_{q^h}![h]_q}+\frac{\triangle_{q^h2,h}f_q(x_0)[x-x_0]_q[x-x_1]_q}{[2]_{q^h}![h]^2_q}
\end{equation*}
$$+\ldots+\frac{\triangle_{q^hm,h}f_q(x_0)[x-x_0]_q[x-x_1]_q\ldots[x-x_{m-1}]_q}{[m]_{q^h}![h]^m_q}$$
where $x_k=x_0+kh, k=1,2,\ldots$ such that if $x_0=0$ and $h=m$, we have
\begin{equation*}
f_q(x)=f_q(x_0)+\frac{\triangle_{q^m,m}f_q(0)[x]_q}{[1]_{q^m}![m]_q}+\frac{\triangle_{q^m2,m}f_q(0)[x]_q[x-m]_q}{[2]_{q^m}![m]^2_q}
\end{equation*}
$$+\ldots+\frac{\triangle_{q^mm,m}f_q(0)[x]_q[x-m]_q\ldots[x-m(m-1)]_q}{[m]_{q^m}![m]^m_q}$$
By (\ref{4.1}) with $t=x$, we get
$$\sum^n_{k=0}L_{m,r}[n,k]_q[x \vert m]_{k,q}=[x+2r \vert m]_{\overline{n},q}$$
which can be rewritten as
$$\sum^n_{k=0}L_{m,r}[n,k]_q[x]_q[x-m]_q[x-2m]_q \ldots [x-(k-1)m]_q=[x+2r \vert m]_{\overline{n},q}.$$
Suppose $f_q(x)=[x+2r \vert m]_{\overline{n},q}$ and
$$L_{m,r}[n,k]_q=\frac{\triangle^k_{q^m,m}f_q(0)}{[k]_{q^m}![m]^k_q}=\frac{1}{[k]_{q^m}![m]^k_q} \cdot \bigtriangleup^k_{q^m,m}f_q(0)$$
Note that applying the above Newton's Interpolation Formula and the identity in \eqref{4.2.1}, we have
\begin{align*}
\bigtriangleup^f_{q^m,m}f_q(x)&=\sum^k_{j=0}(-1)^{k-j}q^{m(^{k-j}_2)}[^k_j]_{q^m}f_q(x+jm)\\
&=\sum^k_{j=0}(-1)^{k-j}q^{m(^{k-j}_2)}[^k_j]_{q^m}[x+jm+2r \vert m]_{\overline{n},q}.
\end{align*}
Evaluate at $x=0$,
$$\triangle^f_{q^m,m}f_q(0)=\sum^k_{j=0}(-1)^{k-j}q^{m(^{k-j}_2)}[^k_j]_{q^m}[2r+jm \vert m]_{\overline{n},q}.$$
Thus,
$$L_{m,r}[n,k]_q=\frac{1}{[k]_{q^m}![m]^k_q}\sum^k_{j=0}(-1)^{k-j}q^{m(^{k-j}_2)}[^k_j]_{q^m}[2r+jm \vert m]_{\overline{n},q}.$$
So, the explicit formula for $L_{m,r}[n,k]_q$ is as follows:

\begin{thm}
The explicit formula for $L_{m,r}[n,k]_q$ is given by
\begin{equation}
L_{m,r}[n,k]_q=\frac{1}{[k]_{q^m}![m]^k_q}\sum^k_{j=0}(-1)^{k-j}q^{m(^{k-j}_2)}[^k_j]_{q^m}[2r+jm \vert m]_{\overline{n},q} \label{4.2.2}
\end{equation}
\end{thm}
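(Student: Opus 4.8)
The plan is to derive the explicit formula \eqref{4.2.2} directly from the horizontal generating function \eqref{4.1} by treating it as an interpolation identity and inverting it using the $q$-analogue of Newton's Interpolation Formula. First I would fix $n$ and regard $f_q(x) := [x+2r \vert m]_{\overline{n},q}$ as a polynomial in the variable $x$. The horizontal generating function, evaluated at $t=x$, already expresses $f_q(x)$ as a linear combination of the falling $q$-factorial basis $[x \vert m]_{k,q} = [x]_q[x-m]_q \cdots [x-(k-1)m]_q$, with the $(q,r)$-Whitney-Lah numbers $L_{m,r}[n,k]_q$ as the coefficients. The key observation is that this is precisely the expansion produced by Newton's Interpolation Formula with base point $x_0 = 0$ and step $h = m$, so the coefficient of the $k$-th basis element must be $\triangle^k_{q^m,m}f_q(0)/([k]_{q^m}![m]^k_q)$.

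The main steps, in order, are: (1) invoke the $q$-Newton formula (the umbral Taylor series) with $x_0=0$, $h=m$ to write $f_q(x) = \sum_k \bigl(\triangle^k_{q^m,m}f_q(0)/([k]_{q^m}![m]^k_q)\bigr)\,[x \vert m]_{k,q}$; (2) match this expansion term-by-term against the horizontal generating function, using the fact that the basis $\{[x\vert m]_{k,q}\}_{k\ge 0}$ is linearly independent (a triangular change of basis from the monomials), to conclude $L_{m,r}[n,k]_q = \triangle^k_{q^m,m}f_q(0)/([k]_{q^m}![m]^k_q)$; and (3) expand the iterated $q$-difference operator using its closed form \eqref{4.2.1}, namely $\triangle^k_{q^m,m}f_q(x) = \sum_{j=0}^{k}(-1)^{k-j}q^{m\binom{k-j}{2}}\bracketed{k}{j}_{q^m} f_q(x+jm)$, and then evaluate at $x=0$. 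Substituting $f_q(jm) = [jm+2r \vert m]_{\overline{n},q}$ yields exactly \eqref{4.2.2}.

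The one point that requires genuine care, rather than routine bookkeeping, is the justification that the coefficients in the two expansions can be identified. This rests on the uniqueness of the representation of a polynomial in the Newton basis $\{[x\vert m]_{k,q}\}$, which in turn requires that $f_q$ actually be a polynomial of the right degree in $x$ and that the $q$-difference operator $\triangle_{q^m,m}$ interact with this basis as an annihilating lowering operator (so that the $k$-th coefficient is isolated by applying $\triangle^k_{q^m,m}$ and evaluating at $0$). Since $[x+2r\vert m]_{\overline{n},q} = \prod_{i=0}^{n-1}[x+2r+im]_q$ is a polynomial of $q$-degree $n$ in $[x]_q$, both expansions terminate at $k=n$ and the matching is legitimate; this is the step I would state most carefully, as the rest is the mechanical substitution into \eqref{4.2.1}.

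\begin{proof}
By the horizontal generating function \eqref{4.1} with $t=x$,
$$\sum^n_{k=0}L_{m,r}[n,k]_q\,[x \vert m]_{k,q}=[x+2r \vert m]_{\overline{n},q}.$$
Set $f_q(x)=[x+2r \vert m]_{\overline{n},q}$. Applying the $q$-analogue of Newton's Interpolation Formula with base point $x_0=0$ and step $h=m$, the expansion of $f_q$ in the $q$-factorial basis $[x \vert m]_{k,q}$ has $k$-th coefficient equal to
$$\frac{\triangle^k_{q^m,m}f_q(0)}{[k]_{q^m}![m]^k_q}.$$
Since the basis $\{[x\vert m]_{k,q}\}_{k\ge 0}$ is linearly independent, comparison with the expansion above gives
$$L_{m,r}[n,k]_q=\frac{\triangle^k_{q^m,m}f_q(0)}{[k]_{q^m}![m]^k_q}.$$
Now expand the $q$-difference operator by \eqref{4.2.1}:
\begin{align*}
\triangle^k_{q^m,m}f_q(x)&=\sum^k_{j=0}(-1)^{k-j}q^{m\binom{k-j}{2}}\bracketed{k}{j}_{q^m}f_q(x+jm)\\
&=\sum^k_{j=0}(-1)^{k-j}q^{m\binom{k-j}{2}}\bracketed{k}{j}_{q^m}[x+jm+2r \vert m]_{\overline{n},q}.
\end{align*}
Evaluating at $x=0$ yields
$$\triangle^k_{q^m,m}f_q(0)=\sum^k_{j=0}(-1)^{k-j}q^{m\binom{k-j}{2}}\bracketed{k}{j}_{q^m}[2r+jm \vert m]_{\overline{n},q}.$$
Dividing by $[k]_{q^m}![m]^k_q$ gives \eqref{4.2.2}, as required.
\end{proof}
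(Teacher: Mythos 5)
Your proposal is correct and follows essentially the same route as the paper: evaluating the horizontal generating function \eqref{4.1} at $t=x$, applying the $q$-analogue of Newton's Interpolation Formula with $x_0=0$ and $h=m$ to identify $L_{m,r}[n,k]_q$ with $\triangle^k_{q^m,m}f_q(0)/([k]_{q^m}![m]^k_q)$, and then expanding the $q$-difference operator via \eqref{4.2.1} at $x=0$. If anything, you are more careful than the paper, which simply posits the coefficient identification without remarking on the uniqueness of the expansion in the basis $\{[x\vert m]_{k,q}\}$ that you rightly flag as the one step needing justification.
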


\begin{rema}\rm
When $q\rightarrow 1$, the above theorem reduces to, 
$$L_{m,r}(n,k)=\frac{1}{k!m^k}\sum^k_{j=0}(-1)^{k-j}(^k_j)(2r+jm \vert m)_{\overline{n}},$$
which is the explicit formula of $L_{m,r}(n,k)$, where
$$(t \vert m)_{\overline{n}}=t(t+m)\ldots (t+(n-1)m).$$
\end{rema}

\begin{rema}\rm
For brevity, \eqref{4.2.2} can be written as
$$L_{m,r}[n,k]_q=\frac{1}{[k]_{q^m}![m]^k_q}[\triangle^k_{q^m,m}[x+2r \vert m]_{\overline{n},q}]_{x=0}$$
\end{rema}

\begin{thm}
For nonnegative integers $n$ and $k$, and real number $a$, the $q$-analogue $L_{m,r}[n,k]_q$ has an exponential generating function.
\begin{equation}
\sum_{n\geq o}L_{m,r}[n,k]_q \frac{[t]^n_q}{[n]_q!}=\frac{1}{[k]_{q^m}![m]^k_q}[\triangle^k_{q^m,m}(F[x+2r+jm,m,t])_{x=0}, \label{4.13}
\end{equation}
where
$$F[x,m,t]=\sum_{n\geq0}[x \vert m]_{\overline{n},q}\frac{[t]_q^n}{[n]_q!}.$$
\end{thm}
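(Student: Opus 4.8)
The plan is to read off the generating function directly from the explicit formula \eqref{4.2.2}, since that formula already expresses $L_{m,r}[n,k]_q$ as a fixed finite linear combination of the quantities $[2r+jm\vert m]_{\overline{n},q}$, whose only $n$-dependence sits in the rising factorial. First I would substitute \eqref{4.2.2} into the defining sum, obtaining
\begin{equation*}
\sum_{n\geq 0}L_{m,r}[n,k]_q\,\frac{[t]^n_q}{[n]_q!}=\sum_{n\geq 0}\frac{[t]^n_q}{[n]_q!}\cdot\frac{1}{[k]_{q^m}![m]^k_q}\sum_{j=0}^{k}(-1)^{k-j}q^{m\binom{k-j}{2}}\bracketed{k}{j}_{q^m}[2r+jm\vert m]_{\overline{n},q}.
\end{equation*}
The prefactor $\tfrac{1}{[k]_{q^m}![m]^k_q}$ together with the entire alternating $j$-sum is independent of $n$ except through $[2r+jm\vert m]_{\overline{n},q}$, so I would pull the constant out front and interchange the finite sum over $j\in\{0,\dots,k\}$ with the series over $n$. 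This interchange carries no convergence difficulty, because the $j$-sum has only $k+1$ terms.

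After interchanging, the inner series over $n$ becomes $\sum_{n\geq 0}[2r+jm\vert m]_{\overline{n},q}\,[t]^n_q/[n]_q!$, which is exactly $F[2r+jm,m,t]$ by the definition of $F$ given in the statement. The right-hand side therefore collapses to
\begin{equation*}
\frac{1}{[k]_{q^m}![m]^k_q}\sum_{j=0}^{k}(-1)^{k-j}q^{m\binom{k-j}{2}}\bracketed{k}{j}_{q^m}\,F[2r+jm,m,t].
\end{equation*}
The final step is to recognise this weighted alternating sum as a single application of the $q$-difference operator. Setting $g(x)=F[x+2r,m,t]$, the explicit formula \eqref{4.2.1} for $\triangle^k_{q^m,m}$ gives $[\triangle^k_{q^m,m}g(x)]_{x=0}=\sum_{j=0}^{k}(-1)^{k-j}q^{m\binom{k-j}{2}}\bracketed{k}{j}_{q^m}g(jm)$, and since $g(jm)=F[2r+jm,m,t]$ this is precisely the sum above, yielding \eqref{4.13}.

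I expect the argument to be essentially formal, with no serious obstacle. The only points demanding care are bookkeeping: matching the shift $g(x+jm)$ in \eqref{4.2.1} against the argument $F[x+2r+jm,m,t]$ written in the statement (the $jm$ there is produced by the operator, while the $+2r$ is absorbed into the definition of $g$), and confirming that the prefactor $\tfrac{1}{[k]_{q^m}![m]^k_q}$ and the powers $q^{m\binom{k-j}{2}}$ align exactly with those in the explicit formula. Because both \eqref{4.2.2} and \eqref{4.2.1} are already established, these amount to routine verifications rather than genuine difficulties.
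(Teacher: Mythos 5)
Your proposal is correct and follows essentially the same route as the paper's proof: substitute the explicit formula \eqref{4.2.2}, interchange the finite $j$-sum with the series over $n$, identify the inner series as $F[2r+jm,m,t]$, and repackage the alternating sum as $[\triangle^k_{q^m,m}F[x+2r,m,t]]_{x=0}$ via \eqref{4.2.1}. Your closing remark about the bookkeeping is apt --- the $jm$ in the statement's notation is indeed produced by the operator rather than being part of the argument of $g$ --- and if anything your write-up makes this step more explicit than the paper does.
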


\begin{proof}
Using the explicit formula in \eqref{4.2.2}, we obtained
\begin{align*}
\sum_{n\geq 0}L_{m,r}[n,k] \frac{[t]^n_q}{[n]_q!}&=\sum_{n\geq0}\frac{1}{[k]_{q^m}![m]^k_q}\sum^k_{j=0}(-1)^{k-j}q^{m(^{k-j}_2)}[^k_j]_{q^m}[2r+jm \vert m]_{\overline{n},q} \frac{[t]^n_q}{[n]_q!}\\
&=\sum^k_{j=0}\frac{1}{[k]_{q^m}![m]^k_q}(-1)^{k-j}q^{m(^{k-j}_2)}[^k_j]_{q^m}\sum_{n\geq0}[2r+jm \vert m]_{\overline{n},q}\frac{[t]_q^n}{[n]_q!}.
\end{align*}
Then, we have
\begin{align*}
&=\frac{1}{[k]_{q^m}![m]^k_q}\sum^k_{j=0}(-1)^{k-j}q^{m(^{k-j}_2)}[^k_j]_{q^m}F[2r+jm,m,t]\\
&=\frac{1}{[k]_{q^m}![m]^k_q}[\triangle^k_{q^m,m}(F[x+2r+jm,m,t])_{x=0}
\end{align*}
\end{proof}

\begin{rema}\rm
When $q\rightarrow 1$, \eqref{4.13} becomes
\begin{align*}
\sum_{n\geq 0}L_{m,r}(n,k)\frac{t^n}{n!}&=\frac{1}{k!m^k}\sum^k_{j=0}(-1)^{k-j}[^k_j]F(2r+jm,m,t)
\end{align*}
where
$$F(2r+jm,m,t)=\sum_{n\geq0}(2r+jm \vert m)_{\overline{n},q}\frac{[t]_q^n}{[n]_q!}.$$
\end{rema}

\section{An Explicit Formula for $(q,r)$-Dowling Numbers}
One of the common properties of Lah-type numbers is their relation with both kinds of Stirling-type or Whitney-type numbers. Analogous to this, it is also interesting to express $(q,r)$-Whitney-Lah numbers in terms of $(q,r)$-Whitney numbers. To do this, we need to define the $(q,r)$-Whitney numbers of the first kind.
\begin{defn}\rm
The $(q,r)$-Whitney numbers of the first kind $w_{m,r}[n,k]_q$ are defined as coefficients of the following generating function
\begin{equation}\label{defn_qWNFK}
[t-r|m]_{n,q}=\sum_{k=0}^nw_{m,r}[n,k]_q[t]_q^k.
\end{equation}
\end{defn}
Note that 
\begin{align*}
\sum_{k=0}^{n+1}w_{m,r}[n+1,k]_q[t]_q^k&=[t-r|m]_{n+1,q}=[t-r|m]_{n,q}[t-r-nm]_{q}\\
&=\frac{1}{q^{r+nm}}\left([t]_q-[r+nm]_q\right)\sum_{k=0}^{n}w_{m,r}[n,k]_q[t]_q^k\\
&=\frac{1}{q^{r+nm}}\left(\sum_{k=0}^{n}w_{m,r}[n,k]_q[t]_q^{k+1}-\sum_{k=0}^{n}[r+nm]_qw_{m,r}[n,k]_q[t]_q^k\right)\\
&=\frac{1}{q^{r+nm}}\left(\sum_{k=0}^{n+1}w_{m,r}[n,k-1]_q[t]_q^{k}-\sum_{k=0}^{n+1}[r+nm]_qw_{m,r}[n,k]_q[t]_q^k\right)\\
&=\sum_{k=0}^{n+1}\frac{1}{q^{r+nm}}\left(w_{m,r}[n,k-1]_q-[r+nm]_qw_{m,r}[n,k]_q\right)[t]_q^k.
\end{align*}
Comparing the coefficients of $[t]_q^{k}$ completes the proof the following theorem.
\begin{thm}
The $(q,r)$-Whitney numbers of the first kind $w_{m,r}[n,k]_q$ satisfy the following triangular recurrence relation:
\begin{equation}
w_{m,r}[n+1,k]_q[t]_q^k=\frac{1}{q^{r+nm}}\left(w_{m,r}[n,k-1]_q-[r+nm]_qw_{m,r}[n,k]_q\right)
\end{equation}
with initial condition $w_{m,r}[0,0]_q=1$, $w_{m,r}[n,0]_q=(-1)^{n}q^{-r-(n-1)m}[r+(n-1)m]_q$ and $w_{m,r}[n,k]_q=0$ if $n<k$.
\end{thm}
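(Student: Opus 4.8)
The plan is to read the recurrence straight off the defining generating function \eqref{defn_qWNFK} by exploiting the one-step factorization of the $q$-falling factorial. First I would write
\begin{equation*}
[t-r|m]_{n+1,q}=[t-r|m]_{n,q}\,[t-r-nm]_q,
\end{equation*}
which is immediate from the product form of $[t-r|m]_{n,q}$, and then rewrite the new factor with the $q$-subtraction identity $[t-k]_q=\frac{1}{q^{k}}([t]_q-[k]_q)$ applied with $k=r+nm$, giving
\begin{equation*}
[t-r-nm]_q=\frac{1}{q^{r+nm}}\big([t]_q-[r+nm]_q\big).
\end{equation*}
Substituting the series \eqref{defn_qWNFK} for $[t-r|m]_{n,q}$ into this product and distributing yields two sums in powers of $[t]_q$.

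The second step is the index bookkeeping. In the sum carrying the extra factor $[t]_q$ I would shift $k\mapsto k-1$ so that both sums are expressed in $[t]_q^{k}$, and then compare the coefficient of $[t]_q^{k}$ against the series for $[t-r|m]_{n+1,q}$. This is precisely the computation displayed just before the statement, and it produces
\begin{equation*}
w_{m,r}[n+1,k]_q=\frac{1}{q^{r+nm}}\big(w_{m,r}[n,k-1]_q-[r+nm]_q\,w_{m,r}[n,k]_q\big).
\end{equation*}

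The boundary data I would handle separately. The value $w_{m,r}[0,0]_q=1$ follows from evaluating \eqref{defn_qWNFK} at $n=0$, where $[t-r|m]_{0,q}=1$. The vanishing $w_{m,r}[n,k]_q=0$ for $n<k$ is a degree count: each factor of $[t-r|m]_{n,q}$ is linear in $[t]_q$, so $[t-r|m]_{n,q}$ has degree $n$ in $[t]_q$ and all coefficients of $[t]_q^{k}$ with $k>n$ are zero. Finally, setting $k=0$ in the recurrence and using $w_{m,r}[n,-1]_q=0$ collapses it to the first-order relation $w_{m,r}[n+1,0]_q=-q^{-r-nm}[r+nm]_q\,w_{m,r}[n,0]_q$; iterating this from $w_{m,r}[0,0]_q=1$ by induction on $n$ yields the claimed value of $w_{m,r}[n,0]_q$.

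The computation carries no deep difficulty; the one place that needs care is the index-shifting and boundary handling in the coefficient comparison---one must verify that the shifted sum and the original sum combine over a common range and that the extreme terms ($k=0$ and $k=n+1$) are correctly accounted for---so that no spurious or missing term corrupts the recurrence. Once the ranges are aligned, equating coefficients of $[t]_q^{k}$ is immediate.
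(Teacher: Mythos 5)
Your proposal is correct and follows essentially the same route as the paper: the paper's proof is exactly the coefficient comparison displayed just before the theorem, obtained by factoring $[t-r|m]_{n+1,q}=[t-r|m]_{n,q}[t-r-nm]_q$, applying $[t-k]_q=\frac{1}{q^k}([t]_q-[k]_q)$, and shifting the index in the sum carrying the extra $[t]_q$. Your separate treatment of the boundary values (the degree count for $n<k$ and the iteration giving $w_{m,r}[n,0]_q$) is a sound addition that the paper leaves implicit.
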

\begin{table*}[htbp]
	\centering
		\begin{tabular}{|c|c|c|c|c|}
		\hline
		$n/k$ & 0	& 1  & 2 \\
				\hline
	$0$	& 1 & $0$ & $0$\\
		\hline
	$1$	& $-q^{-r}[r]_q$ & $q^{-r}$ & $0$\\
		\hline
	$2$	& $q^{-(r+m)}[r+m]_q$ & $-q^{-(2r+m)}([r]_q+[r+m]_q)$ & $q^{-(2r+m)}$\\
	  \hline
		\end{tabular}
\end{table*}

The next theorem contains the orthogonality relation of $(q,r)$-Whitney numbers of the first and second kinds.
\begin{thm}
The following orthogonality relation holds
\begin{equation}\label{ortho1}
\sum_{k=j}^nw_{m,r}[n,k]_qW_{m,r}[k,j]_q=\sum_{k=j}^nW_{m,r}[n,k]_qw_{m,r}[k,j]_q=\delta_{nj}=
\begin{cases}
0, & if\;\;\; j\neq n\\
1, & if\;\;\; j=n
\end{cases}
\end{equation}
\end{thm}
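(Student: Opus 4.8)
The statement asserts that, regarded as lower-triangular arrays, $\big(w_{m,r}[n,k]_q\big)$ and $\big(W_{m,r}[n,k]_q\big)$ are inverse to one another, so the plan is to read the two defining generating functions as a pair of mutually inverse change-of-basis relations and compose them. Treat $x=[t]_q$ as a formal indeterminate. Relation \eqref{defn_qWNFK} expresses the generalized factorial $[t-r|m]_{n,q}$ in the power basis $\{x^k\}$, while the horizontal generating function \eqref{defn_qWNSK} of the second kind expresses the power $x^n$ in the factorial family $\{[t-r|m]_{k,q}\}$. Both families are bases of the space of polynomials in $x$: the powers $\{x^k\}$ obviously, and the factorials because, using the identity $[t-a]_q=q^{-a}([t]_q-[a]_q)$ recorded in the definition, each factor $[t-r-im]_q$ is linear in $x$, so $[t-r|m]_{k,q}$ is a polynomial of degree $k$ in $x$ with nonzero leading coefficient; a family containing exactly one polynomial of each degree is a basis.

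For the first equality I would start from \eqref{defn_qWNSK} and replace every factor $[t-r|m]_{k,q}$ by its expansion \eqref{defn_qWNFK}, obtaining
\begin{equation*}
[t]_q^n=\sum_{k=0}^n W_{m,r}[n,k]_q\sum_{j=0}^k w_{m,r}[k,j]_q\,[t]_q^{\,j}
=\sum_{j=0}^n\Bigl(\sum_{k=j}^n W_{m,r}[n,k]_q\,w_{m,r}[k,j]_q\Bigr)[t]_q^{\,j},
\end{equation*}
after interchanging the order of summation. Comparing the coefficients of $[t]_q^{\,j}$ on both sides---which is legitimate precisely because the powers $\{[t]_q^{\,j}\}$ are linearly independent---yields $\sum_{k=j}^n W_{m,r}[n,k]_q\,w_{m,r}[k,j]_q=\delta_{nj}$.

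The second equality is proved symmetrically: beginning from \eqref{defn_qWNFK} and substituting the second-kind expansion \eqref{defn_qWNSK} for each power $[t]_q^{\,k}$ gives $[t-r|m]_{n,q}=\sum_{j=0}^n\big(\sum_{k=j}^n w_{m,r}[n,k]_q W_{m,r}[k,j]_q\big)[t-r|m]_{j,q}$, and comparing coefficients of $[t-r|m]_{j,q}$ produces the companion relation. The one point that genuinely needs care---and hence the main obstacle---is the justification for comparing coefficients in this second case: it rests on the linear independence of the factorial family $\{[t-r|m]_{j,q}\}$, which is exactly the basis property established in the first paragraph. Once that is in hand the remaining work is only the bookkeeping of the summation ranges (the triangular support of both arrays forces $k$ to run from $j$ to $n$), and the common value $\delta_{nj}$ follows immediately.
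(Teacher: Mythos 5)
Your proposal is correct and follows essentially the same route as the paper: substitute each horizontal generating function \eqref{defn_qWNFK} and \eqref{defn_qWNSK} into the other, interchange the order of summation, and compare coefficients in the respective bases $\{[t]_q^{\,j}\}$ and $\{[t-r|m]_{j,q}\}$. Your explicit justification that the factorial family $\{[t-r|m]_{k,q}\}$ is a basis (one polynomial of each degree in $[t]_q$) is a point the paper leaves implicit, but it does not change the argument.
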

\begin{proof} Applying equations  \eqref{defn_qWNSK} and \eqref{defn_qWNFK} yields
\begin{align*}
[t-r|m]_{n,q}&=\sum_{k=0}^nw_{m,r}[n,k]_q[t]_q^k=\sum_{k=0}^nw_{m,r}[n,k]_q\sum_{j=0}^kW_{m,r}[k,j]_q[t-r|m]_{j,q}\\
&=\sum_{j=0}^n\sum_{k=j}^nw_{m,r}[n,k]_qW_{m,r}[k,j]_q[t-r|m]_{j,q}=\sum_{j=0}^n\left(\sum_{k=j}^nw_{m,r}[n,k]_qW_{m,r}[k,j]_q\right)[t-r|m]_{j,q}.
\end{align*}
By comparing coefficients, we get
\begin{equation}\label{ortho1}
\sum_{k=j}^nw_{m,r}[n,k]_qW_{m,r}[k,j]_q=
\begin{cases}
0, & if\;\;\; j\neq n\\
1, & if\;\;\; j=n.
\end{cases}
\end{equation}
Similarly, we have
\begin{align*}
[t]_q^n&=\sum_{k=0}^nW_{m,r}[n,k]_q[t-r|m]_{k,q}=\sum_{k=0}^nW_{m,r}[n,k]_q\sum_{j=0}^kw_{m,r}[k,j]_qj[t]_q^n\\
&=\sum_{j=0}^n\sum_{k=j}^nW_{m,r}[n,k]_qw_{m,r}[k,j]_q[t]_q^j=\sum_{j=0}^n\left(\sum_{k=j}^nW_{m,r}[n,k]_qw_{m,r}[k,j]_q\right)[t]_q^j.
\end{align*}
Hence,
\begin{equation}\label{ortho2}
\sum_{k=j}^nW_{m,r}[n,k]_qw_{m,r}[k,j]_q=
\begin{cases}
0, & if\;\;\; j\neq n\\
1, & if\;\;\; j=n
\end{cases}
\end{equation}
\end{proof}
Using the orthogonality relations in \eqref{ortho1} and  \eqref{ortho2}, we can easily prove the following inverse relation.
\begin{thm}
The following inverse relations hold:
\begin{align}
f_n&=\sum_{k=0}^nw_{m,r}[n,k]_qg_k\Leftrightarrow g_n=\sum_{k=0}^nW_{m,r}[n,k]_qf_k \label{inv_rel}\\
f_k&=\sum_{n=0}^{\infty}w_{m,r}[n,k]_qg_n\Leftrightarrow g_k=\sum_{n=0}^{\infty}W_{m,r}[n,k]_qf_k.\label{inv_rel1}
\end{align}
\end{thm}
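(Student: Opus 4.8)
The plan is to obtain both inverse relations as immediate consequences of the two orthogonality relations \eqref{ortho1} and \eqref{ortho2} established in the preceding theorem, so that beyond a single interchange in the order of summation essentially no new computation is required. In each case I would substitute one of the two claimed expansions into the other and let orthogonality collapse the resulting double sum to a single surviving term.

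For the finite relation \eqref{inv_rel}, I would treat the two implications separately. For the forward direction, assuming $f_n=\sum_{k=0}^nw_{m,r}[n,k]_qg_k$ and hence $f_k=\sum_{j=0}^kw_{m,r}[k,j]_qg_j$ for every $k$, I would substitute into the candidate right-hand side and swap the summations:
\begin{align*}
\sum_{k=0}^nW_{m,r}[n,k]_qf_k
&=\sum_{k=0}^nW_{m,r}[n,k]_q\sum_{j=0}^kw_{m,r}[k,j]_qg_j\\
&=\sum_{j=0}^n\left(\sum_{k=j}^nW_{m,r}[n,k]_qw_{m,r}[k,j]_q\right)g_j
=\sum_{j=0}^n\delta_{nj}g_j=g_n,
\end{align*}
where the inner sum collapses by \eqref{ortho2}. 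The converse is entirely symmetric: substituting the relation $g_k=\sum_{j=0}^kW_{m,r}[k,j]_qf_j$ into $\sum_{k=0}^nw_{m,r}[n,k]_qg_k$, interchanging the order of summation, and invoking \eqref{ortho1} recovers $f_n$. The only bookkeeping point to watch is that after the swap the lower index of the middle summation shifts from $0$ to $j$, which is legitimate precisely because $w_{m,r}[k,j]_q$ and $W_{m,r}[k,j]_q$ vanish for $k<j$.

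For the column-type relation \eqref{inv_rel1}, I would run the same substitution argument with the roles of the two indices exchanged, now summing the orthogonality relation over the running upper index rather than over the middle index; the algebra is identical in form. The single genuine subtlety—and the step I expect to be the main obstacle—is justifying the interchange of the two \emph{infinite} summations, since the collapsing identity only yields a well-defined result once the double series may be reordered. I would handle this either by imposing an absolute-convergence hypothesis on the sequences $(f_n)$ and $(g_n)$, or by adopting the standard convention that only finitely many terms are nonzero, which holds automatically whenever the sequences arise from polynomial expansions as in \eqref{defn_qWNSK} and \eqref{defn_qWNFK}. Under either assumption the interchange is valid and orthogonality again forces the double sum to a single term, completing the proof.
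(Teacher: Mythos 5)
Your proposal is correct and follows exactly the route the paper intends: the paper gives no written proof, stating only that the theorem follows easily from the orthogonality relations \eqref{ortho1} and \eqref{ortho2}, and your substitute-swap-collapse argument (with the index-shift justification via the vanishing of $w_{m,r}[k,j]_q$ and $W_{m,r}[k,j]_q$ for $k<j$) supplies precisely that omitted computation. Your attention to the interchange of infinite sums in \eqref{inv_rel1} is a point the paper silently glosses over (note also that the $f_k$ inside the infinite sum there should read $f_n$, as your treatment implicitly assumes), so your write-up is if anything more careful than the source.
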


\smallskip
\begin{rema}\rm
Using the inverse relation in \eqref{inv_rel1} and the exponential generating function in \eqref{exp_qrWhit}, we can easily obtain the following identity
\begin{equation}\label{iden1}
	\sum_{n\geq 0}\frac{[k]_q!w_{m,r}[n,k]_q[\Delta_{q^m,m^{n}}e_q([x+jm+r]_q[t]_q)]_{x=0}}{[t]^k_q[n]_{q^m}![m]^n_q} =1.
\end{equation}
Also, using the inverse relation in \eqref{inv_rel1} and the rational generating function in \eqref{rationalGF}, we obtain
\begin{equation}\label{iden2}
\sum_{n\geq k}\frac{w_{m,r}[n,k]_qq^{m\binom{n}{2}+nr}[t]^{n-k}_q}{\prod_{j=0}^{n}(1-[mj+r]_q[t]_q)}
	=1.
\end{equation}
Thus, combining \eqref{iden1} and \eqref{iden2} yields
\begin{equation}
	\sum_{n\geq 0}w_{m,r}[n,k]_q\left\{\frac{[k]_q![\Delta_{q^m,m^{n}}e_q([x+jm+r]_q[t]_q)]_{x=0}}{[n]_{q^m}![m]^n_q}-\frac{q^{m\binom{n}{2}+nr}[t]^{n}_q}{\prod_{j=0}^{n}(1-[mj+r]_q[t]_q)}\right\} =0.
\end{equation}
\end{rema}

Now, we can establish the relation between $(q,r)$-Whitney-Lah numbers and both kinds of $(q,r)$-Whitney numbers.
\begin{thm}
The $(q,r)$-Whitney-Lah numbers satisfy the following relation
\begin{equation}\label{LahSN}
L_{m,r}[n,j]_q=\sum_{k=j}^nw_{m,-r}[n,k]_qW_{m,r}[k,j]_q.
\end{equation}
\end{thm}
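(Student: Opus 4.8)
The plan is to imitate the coefficient-comparison argument that the paper used for the orthogonality relation \eqref{ortho1}: route the $q$-factorial attached to $L_{m,r}[n,j]_q$ through the power basis $\{[t]_q^k\}$ by means of the first-kind numbers $w_{m,-r}[n,k]_q$, and then back into the falling-factorial basis $\{[t-r|m]_{j,q}\}$ by means of $W_{m,r}[k,j]_q$. Since $\{[t-r|m]_{j,q}\}_{j\ge 0}$ is a basis for the span of the powers of $[t]_q$, it will suffice to produce two expansions of one and the same $q$-factorial in this basis and to read off coefficients.

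First I would record the horizontal generating function \eqref{4.1} after the shift $t\mapsto t-r$, which turns the rising factorial into the expansion
\[
\sum_{j=0}^n L_{m,r}[n,j]_q\,[t-r|m]_{j,q}=[t+r|m]_{\overline{n},q},
\]
putting the $L$-side in exactly the basis that appears on the right of \eqref{defn_qWNSK}. On the other hand, the definition \eqref{defn_qWNFK} with $r$ replaced by $-r$ reads $[t+r|m]_{n,q}=\sum_{k=0}^n w_{m,-r}[n,k]_q[t]_q^k$; substituting $[t]_q^k=\sum_{j=0}^k W_{m,r}[k,j]_q[t-r|m]_{j,q}$ from \eqref{defn_qWNSK} and interchanging the order of summation gives
\[
[t+r|m]_{n,q}=\sum_{j=0}^n\Bigl(\sum_{k=j}^n w_{m,-r}[n,k]_q\,W_{m,r}[k,j]_q\Bigr)[t-r|m]_{j,q}.
\]
Comparing the two displays and equating the coefficient of each $[t-r|m]_{j,q}$ then yields the asserted identity \eqref{LahSN}.

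The step that must be handled with care — and which I expect to be the main obstacle — is the reconciliation of the rising factorial $[t+r|m]_{\overline{n},q}$ produced by \eqref{4.1} with the falling factorial $[t+r|m]_{n,q}$ whose power-expansion defines $w_{m,-r}[n,k]_q$. In the $q$-setting the passage between the rising and falling $q$-factorials is not a mere reindexing: each factor $[t+r+im]_q$ has to be rewritten through the addition rule $[a+b]_q=[a]_q+q^{a}[b]_q$, so identifying the two first-kind expansions carries extra powers of $q$ together with a shift in the $r$-parameter. I would therefore isolate, as a preliminary lemma, the statement that expanding the factorial attached to $L_{m,r}[n,j]_q$ in the power basis reproduces precisely the coefficients $w_{m,-r}[n,k]_q$ — equivalently, re-deriving \eqref{4.1} directly in the $\{[t-r|m]_{j,q}\}$ basis straight from the recurrence \eqref{3.1}, so that the $q$-power and sign bookkeeping is pinned down before the comparison. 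Once this lemma is in place, the coefficient comparison above closes the proof immediately.
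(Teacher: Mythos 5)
Your plan retraces the paper's own proof step for step---shift $t\mapsto t-r$ in \eqref{4.1}, expand the result in the power basis via \eqref{defn_qWNFK} with $r$ replaced by $-r$, convert back through \eqref{defn_qWNSK}, and compare coefficients of $[t-r|m]_{j,q}$---and the obstacle you flag at the end is exactly the right place to worry. But the preliminary lemma you propose is false, so the plan cannot close as written. With the convention forced by the paper's proof of \eqref{4.1} (it uses $[t|m]_{k+1,q}=[t|m]_{k,q}[t-km]_q$, so $[x|m]_{k,q}=\prod_{i=0}^{k-1}[x-im]_q$), the shifted generating function yields the \emph{rising} factorial $[t+r|m]_{\overline{n},q}=\prod_{i=0}^{n-1}[t+r+im]_q$, whereas \eqref{defn_qWNFK} with $r\mapsto -r$ expands the \emph{falling} factorial $\prod_{i=0}^{n-1}[t+r-im]_q$. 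These are different polynomials in $[t]_q$, and no bookkeeping with $[a+b]_q=[a]_q+q^a[b]_q$ reconciles them: the exact relation is $[t+r|m]_{\overline{n},q}=[t-(-r)\,|\,{-m}]_{n,q}$, a falling factorial with increment $-m$, so the power-basis coefficients of the $L$-side are $w_{-m,-r}[n,k]_q$, \emph{not} $w_{m,-r}[n,k]_q$. No extra $q$-powers appear at all; the entire correction is the sign flip $m\mapsto -m$.

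In fact the identity as printed is false, so no proof of it can succeed. Take $q\to 1$, $m=1$, $r=0$: then $w_{m,-r}[n,k]_q\to s(n,k)$ and $W_{m,r}[k,j]_q\to S(k,j)$, so the right-hand side of \eqref{LahSN} becomes $\sum_{k}s(n,k)S(k,j)=\delta_{nj}$ by the orthogonality relation \eqref{ortho1}, while the recurrence \eqref{3.1} gives $L_{1,0}[2,1]_q=[2]_q\to 2\neq 0$. Carried out honestly, your lemma therefore terminates in the corrected statement $L_{m,r}[n,j]_q=\sum_{k=j}^{n}w_{-m,-r}[n,k]_q\,W_{m,r}[k,j]_q$, where $w_{-m,-r}$ is given by \eqref{defn_qWNFK} with $(m,r)$ replaced by $(-m,-r)$ (equivalently, by defining the first-kind numbers through the rising $q$-factorial); with that replacement your coefficient comparison goes through verbatim, and at $q\to 1$, $m=1$, $r=0$ it recovers the classical $L(n,j)=\sum_k|s(n,k)|S(k,j)$. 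You should know the paper's own proof has exactly the defect you sensed: it silently writes $[t+r|m]_{n,q}$ for what the substitution actually produces, the rising factorial $[t+r|m]_{\overline{n},q}$, and its final line even compares coefficients of $[t|m]_{j,q}$ on one side against $[t-r|m]_{j,q}$ on the other. The correction propagates to the paper's main theorem, whose outer sum should then involve $W_{-m,-r}[n,k]_q$ rather than $W_{m,-r}[n,k]_q$.
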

\begin{proof}
Using the horizontal generating functions for $(q,r)$-Whitney numbers and $(q,r)$-Whitney numbers, we have
\begin{align*}
\sum_{k=0}^nL_{m,r}[n,k]_q[t-r|m]_{k,q}&=[t+r|m]_{n,q}=\sum_{k=0}^nw_{m,-r}[n,k]_q[t]_q^k\\
&=\sum_{k=0}^nw_{m,-r}[n,k]_q\sum_{j=0}^kW_{m,r}[k,j]_q[t-r|m]_{j,q}\\
&=\sum_{j=0}^n\sum_{k=j}^nw_{m,-r}[n,k]_qW_{m,r}[k,j]_q[t-r|m]_{j,q}\\
\sum_{j=0}^nL_{m,r}[n,j]_q[t|m]_{j,q}&=\sum_{j=0}^n\left(\sum_{k=j}^nw_{m,-r}[n,k]_qW_{m,r}[k,j]_q\right)[t-r|m]_{j,q}.
\end{align*}
Comparing the coefficients of $[t|m]_{j,q}$ completes the proof.
\end{proof}
The following theorem contains the main result of this paper.

\smallskip
\begin{thm}
The explicit formula for the first form $(q,r)$-Dowling numbers $D_{m,r}[n]_q$ is given by
$$D_{m,r}[n]_q=\sum_{k=0}^nW_{m,-r}[n,k]_q\left(\sum_{j=0}^kL_{m,r}[k,j]_q\right),$$
\end{thm}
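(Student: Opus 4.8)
The plan is to start from the right-hand side and collapse it back to the definition \eqref{1stformqDow}, namely $D_{m,r}[n]_q=\sum_{k=0}^n W_{m,r}[n,k]_q$, by combining the Lah--Whitney decomposition \eqref{LahSN} with the orthogonality relation \eqref{ortho2}. The first move is to rewrite the inner sum $\sum_{j=0}^k L_{m,r}[k,j]_q$ using \eqref{LahSN} (with $n$ replaced by $k$ and a fresh summation index $i$), which gives $\sum_{j=0}^k\sum_{i=j}^k w_{m,-r}[k,i]_q W_{m,r}[i,j]_q$. Interchanging the order of summation over the triangular region $0\le j\le i\le k$ and recognising that $\sum_{j=0}^i W_{m,r}[i,j]_q=D_{m,r}[i]_q$, the inner sum becomes $\sum_{i=0}^k w_{m,-r}[k,i]_q\,D_{m,r}[i]_q$.

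Substituting this back into the right-hand side yields the double sum $\sum_{k=0}^n\sum_{i=0}^k W_{m,-r}[n,k]_q\,w_{m,-r}[k,i]_q\,D_{m,r}[i]_q$. Interchanging the two summations again, this time over $0\le i\le k\le n$, isolates the factor $\sum_{k=i}^n W_{m,-r}[n,k]_q\,w_{m,-r}[k,i]_q$, which is precisely the orthogonality relation \eqref{ortho2} with $r$ replaced by $-r$. That inner factor equals the Kronecker delta $\delta_{ni}$, so the remaining sum over $i$ collapses to its single nonzero term $i=n$, leaving $D_{m,r}[n]_q$, as required.

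The step I expect to require the most care is parameter bookkeeping rather than any genuine analytic difficulty: orthogonality \eqref{ortho2} applies only when the first-kind and second-kind factors carry the same $r$. The decomposition \eqref{LahSN} conveniently introduces $w_{m,-r}$ (not $w_{m,r}$), which is exactly what pairs with the outer $W_{m,-r}$ appearing in the statement; verifying this sign match is the crux that makes the two reorderings of summation align with a clean orthogonality cancellation. Apart from this, the argument is just two applications of summation reordering over nested triangular index sets followed by a single orthogonality collapse, with no convergence or boundary issues since all sums are finite.
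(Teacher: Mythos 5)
Your proof is correct and is essentially the paper's argument run in reverse: the paper applies the inverse relation \eqref{inv_rel} (with $r$ replaced by $-r$, itself a repackaging of the orthogonality \eqref{ortho2}) to \eqref{LahSN} to obtain the intermediate identity $W_{m,r}[n,j]_q=\sum_{k=j}^n W_{m,-r}[n,k]_q\,L_{m,r}[k,j]_q$ and then sums over $j$, whereas you expand the right-hand side and collapse it by the same orthogonality at parameter $-r$. Your explicit attention to the sign bookkeeping --- that \eqref{ortho2} must be invoked with $-r$ so that $w_{m,-r}$ pairs with $W_{m,-r}$ --- is precisely the point the paper glosses over in its one-line appeal to \eqref{inv_rel}.
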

\begin{proof}
Using the inverse relation \eqref{inv_rel} with 
$$f_n=L_{m,r}[n,j]_q, \;\;\;\;g_k=W_{m,r}[k,j]_q,$$
relation \eqref{LahSN} can be transformed as
\begin{equation}\label{LahSN-1}
W_{m,r}[n,j]_q=\sum_{k=j}^nW_{m,-r}[n,k]_qL_{m,r}[k,j]_q.
\end{equation}
Then summing up both sides over $j$ yields
\begin{align*}
D_{m,r}[n]_q&=\sum_{j=0}^nW_{m,r}[n,j]_q=\sum_{j=0}^n\sum_{k=j}^nW_{m,-r}[n,k]_qL_{m,r}[k,j]_q\\
&=\sum_{k=0}^nW_{m,-r}[n,k]_q\left(\sum_{j=0}^kL_{m,r}[k,j]_q\right),
\end{align*}
which is exactly the desired explicit formula for $(q,r)$-Dowling numbers.
\end{proof}
The above explicit formula may also be called a Qi-type formula for $(q,r)$-Dowling numbers, which is analogous to explicit formula obtained F. Qi \cite{Qi}.

\section{Conclusion}
The explicit formula for the first form $(q,r)$-Dowling numbers has already been derived by establishing an appropriate $(q,r)$-Whitney-Lah numbers and $(q,r)$-Whitney numbers of the first kind. To establish explicit formulas for the two other forms of $(q,r)$-Dowling numbers, it entails defining appropriate versions of $(q,r)$-Whitney-Lah numbers and $(q,r)$-Whitney numbers of the first kind. It is left to the readers to derive these explicit formulas.

\bigskip
\noindent{\bf Acknowledgement}. This research has been funded by Cebu Normal University (CNU) and the Commission  on Higher Education - Grants-in-Aid for Research (CHED-GIA).

\bigskip


\begin{thebibliography}{20}
\bibitem{bou}Belbachir, H. and Bousbaa, I. E., Translated Whitney and $r$-Whitney numbers: A combinatorial Approach, \textit{J. Int. Seq.} \textbf{16}, article 13.8.6, 2013.

\bibitem{beno2}Benoumhani,  M., On some numbers related to Whitney numbers of Dowling lattices, \textit{Advance in Applied Mathematics}, \textbf{9}, pp. 106-116, 1997.

\bibitem{beno} Benoumhani, M.  On Whitney Numbers of Dowling Lattices, \textit{Discrete Math.} pp. 13-33, 1996.

\bibitem{Broder}Broder, A.Z., The $r$-Stirling Numbers, \textit{Discrete Math.}, {\bf 49} (1984), pp.241-259

\bibitem{har} Campos, H., On the Hankel Transform of Non-central Bell Numbers, Master's Thesis, MSU-Marawi, February 2013.

\bibitem{cheon}Cheon, G.S. and Jung, J. H., $r$-Whitney number of Dowling lattices, Discrete Math. 312(2012), 2337-2348.

\bibitem{cillar}Cillar, JA.,  Corcino, R., A $q$-Analogue of Qi Formula for $r$-Dowling Numbers, published online,  https://doi.org/10.4134/CKMS.cl80478.

\bibitem{cor1}Corcino, R.B., The $(r,\beta)$-Stirling numbers, \textit{The Mindanao Forum}, \textbf{14}(2),pp. 91-99, 1999.

\bibitem{Cor-Cor1} Corcino, R.B.  and Corcino, C.B. , \emph{The Hankel Transform of Generalized Bell Numbers and Its q-Analogue}, Utilitas Mathematica, {\bf 89} (2012), 297-309.  

\bibitem{latayada} Corcino, R., Latayada, MJ., MAR. Vega, Hankel Transform of $(q,r)$-Dowling Numbers, {\it Eur. J. Pure Appl. Math}, {\bf 12}(2)(2019), 279-293.

\bibitem{cor4}Corcino, R.B.  and Montero, C.B.,  "A $q$-Analogue of Rucinski-Voigt Numbers", ISRN Discrete Mathematics, vol. 2012, article ID 592818, pages 18.

\bibitem{jen} Corcino, R.B., Ontolan, J.M., Ca\~nete, J. and Latayad, M.R., A q-analogue of r-Whitney numbers of the second kind and its Hankel transform, {\it J. Math. Computer Sci.}, {\bf 21} (2020), 258?272.

\bibitem{cor2} Corcino, R.B, Ontolan, J.M., Hankel Transform of the Second Form $(q,r)$-Dowling Numbers, {\it Eur. J. Pure Appl. Math}, {\bf 12}(4)(2019), 1676-1688. 

\bibitem{kim}Kim, M. and J. Son, A note on $q$-difference operators, {\it Commun. Korean Math. Soc.}, {\bf 17}(3) (2002), 423-430.

\bibitem{medicis} Leroux, P., Medicis, A. De, Generalized Stirling Numbers, Convolution Formulae and $p,q$-Analogues, {\it Can. J. Math.}, {\bf 47}(3) (1995), 474-499.


\bibitem{lindsay} Lindsay, J., et. al., A New Combinatorial interpretation of a $q$-Analogue of the Lah Numbers", {\it J. Combi.}, {\bf 2}(2) (2011), 245-264.

\bibitem{mezo}Mez\H{o},  I., A New Formula for the Bernoulli Polynomials, \textit{Result. Math}. (2010), 330-333.

\bibitem{nyul}Nyul, G. and Racz, G., "The $r$-Lah numbers", Discrete Mathematics 338(2015),1660-1666.

\bibitem{Qi} Qi, F.,  \textit{An explicit formula for the Bell numbers in terms of Lah and Stirling numbers}, {Mediterr. J. Math.} {\bf 13} (2016)(5),  2795--2800; Available online at https://doi.org/10.1007/s00009-015-0655-7.
\end{thebibliography}
\end{document}